\documentclass[12pt]{amsart}

\usepackage[margin=1in]{geometry}
\usepackage[colorlinks=true,linkcolor=blue,citecolor=blue]{hyperref}
\usepackage{cite}

\usepackage{amsmath,amssymb,amsthm,amsfonts,mathtools}
\usepackage{hyperref}
\usepackage[T1]{fontenc}
\usepackage{xcolor}

\theoremstyle{plain}
\newtheorem{theorem}{Theorem}[section]
\newtheorem{lemma}[theorem]{Lemma}
\newtheorem{proposition}[theorem]{Proposition}

\theoremstyle{definition}

\theoremstyle{remark}
\newtheorem{remark}[theorem]{Remark}

\numberwithin{equation}{section}
\newcommand{\norm}[1]{\lVert#1\rVert}

\DeclareMathOperator{\Real}{Re}
\newcommand{\D}{\mathbb{D}}
\newcommand{\T}{\mathbb{T}}

\newcommand{\B}{\mathcal{B}}
\newcommand{\Hp}{\mathcal{H}}

\begin{document}
	\title[Norm of the generalized Hilbert operator  ]{Norm of the generalized Hilbert operator on Hardy spaces}
	
\author{  Songxiao Li, Weiye Pan$^\ast$  and Mengmeng Zhou }

\address{Songxiao Li\\   Department of Mathematics, Shantou University, Shantou 515063, Guangdong, P.R. China.  }
\email{jyulsx@163.com}

\address{ Weiye Pan  \\ Department of Mathematics, Sichuan University,  Chengdu, Sichuan 610065, P.R. China.}
	\email{pan\_weiye@scu.edu.cn  }

\address{Mengmeng Zhou\\   Department of Mathematics, Shantou University, Shantou 515063, Guangdong, P.R. China.  }
\email{25mmzhou@stu.edu.cn}

	\subjclass[2020]{30H10, 47B38}
	
	\begin{abstract}   We study the generalized Hilbert operator
\[
\mathcal{H}_b f(z)=\int_0^1 f(t)\,\frac{(1-t)^b}{(1-tz)^{b+1}}\,dt,
\qquad b>0,
\]
acting on the Hardy spaces $H^p$ for $1\leq p\leq \infty$.
We establish the precise operator norm
\[
\|\mathcal{H}_b\|_{H^p\to H^p}=B\!\left(\frac1p,b+1-\frac1p\right)
\]
for every $1<p<\infty$ and, by continuous extension to $b=0$, recover the classical norm $\pi/\sin(\pi/p)$.
We also prove that $\mathcal{H}_b$ is bounded on $H^1$ for every $b>0$, in contrast with the classical Hilbert operator, and we obtain the sharp restricted norm estimate
\[
\|\mathcal{H}_b\|_{H^1_0\to H^1}=B(1,b).
\]
We also determine the exact norm
\[
\|\mathcal{H}_b\|_{H^\infty\to \mathcal B}=\frac{1}{b+1}+2.
\]

	\thanks{$^\ast$ Corresponding author.}
	\vskip 3mm \noindent{\it Keywords}: Generalized Hilbert operator,  Hardy space, Bloch space, Norm.
	\end{abstract}
		\maketitle

\section{Introduction} \vskip 2mm

\label{sec:intro}

Let $\mathbb D=\{z\in\mathbb C: |z|<1\}$ be the unit disc and let $H(\mathbb D)$ be the space of all analytic functions on $\mathbb D$.
For $0<p\le\infty$ the Hardy space $H^p$ consists of those $f\in H(\mathbb D)$ for which
\[
\|f\|_{H^p}^p:=\sup_{0\le r<1}\frac{1}{2\pi}\int_0^{2\pi}|f(re^{i\theta})|^p\,d\theta<\infty,
\qquad 0<p<\infty,
\]
and $\|f\|_{H^\infty}=\sup_{z\in\mathbb D}|f(z)|$.
We denote by $H^1_0$ the subspace of $H^1$ consisting of functions vanishing at the origin:
\[
H^1_0:=\{f\in H^1: f(0)=0\}.
\]
 Recall that the Bloch space consists of all analytic functions $f$ on $\D$ for which
\[
\|f\|_\B:=|f(0)|+\sup_{z\in\D}(1-|z|^2)|f'(z)|<\infty.
\]
It is well known that $H^\infty \subset \B$. We refer to \cite{Duren70, zkh} for the standard theory of these spaces.

The classical Hilbert matrix $\mathcal{H}=\left(\frac{1}{n+k+1}\right)_{n,k\ge 0}$ induces an
operator on $H(\D)$ by its action on Taylor coefficients: for
$f(z)=\sum_{k=0}^{\infty}a_{k}z^{k}$,
\begin{equation}%\label{eq:H-classical}
\mathcal{H} f(z)=\sum_{n=0}^{\infty}
\Bigl(\sum_{k=0}^{\infty}\frac{a_{k}}{n+k+1}\Bigr)z^{n}. \nonumber
\end{equation}
Equivalently, $\mathcal{H}$ admits the integral representation~\cite{DiamantopoulosSiskakis2000}
\begin{equation}%\label{eq:H-int}
\mathcal{H} f(z)=\int_{0}^{1}\frac{f(t)}{1-tz}\,dt,\qquad z\in\D. \nonumber
\end{equation}
The convergence of the integral is guaranteed for all $f\in H^{1}$
by the Fej\'er--Riesz inequality~\cite[Theorem~3.13]{Duren70} .

Diamantopoulos and Siskakis~\cite{DiamantopoulosSiskakis2000} proved that $\mathcal{H}$ is bounded
on $H^{p}$ for $1<p<\infty$ and obtained the norm estimate
$\norm{\mathcal{H}}_{H^{p}\to H^{p}}\le\pi/\sin(\pi/p)$ for $2\le p<\infty$.
Subsequently, based on a Nehari-type theorem, Dostani\'c, Jevti\'c, and Vukoti\'c~\cite{DostanicJevticVukotic2008}
established the exact norm
\begin{equation}%\label{eq:H-norm}
\norm{\mathcal{H}}_{H^{p}\to H^{p}}= \frac{\pi}{\sin(\pi/p)} ,\qquad 1<p<\infty.\nonumber
\end{equation}
It is well known that $\mathcal{H}$ fails to be bounded on both $H^{1}$
and $H^{\infty}$~\cite{DiamantopoulosSiskakis2000}.

In this paper we consider the generalized Hilbert operator
\begin{equation}\label{eq:Hb}
\mathcal{H}_b f(z)=\int_0^1 f(t)\,\frac{(1-t)^b}{(1-tz)^{b+1}}\,dt,
\qquad b>0,\ z\in\D,
\end{equation}
acting on the scale $H^p$, $1\leq p\leq \infty$. The generalized Hilbert operator $\Hp_{b}$ was introduced by Li and Stevi\'c~\cite{LiStevic2009}. See   \cite{bw, Li2009} for results of $\Hp_{b}$ on Dirichlet-type spaces.  Bellavita, Chalmoukis, Daskalogiannis, and
Stylogiannis~\cite{Bellavita2024} developed a general framework
for operators arising from Hausdorff matrices, which includes
$\Hp_{b}$ as a special case.

The motivation for studying $\Hp_{b}$ is threefold.  First, the parameter $b$ provides a natural one-parameter deformation of the
classical Hilbert operator, and understanding how the norm depends on $b$ sheds light on the structural stability of Hilbert-type
operators.  Second, the additional factor $(1-t)^{b}$ acts as a regularizing term near the endpoint $t=1$, which restores
boundedness on $H^{1}$, a property that the classical operator lacks.  Third, the generalized operator serves as a prototype for
the broader class of Hausdorff operators, and the techniques developed here, can be extend to other integral type operators.

The purpose of this paper is to determine the exact norm of $\mathcal H_b$ on Hardy spaces $H^p$ for $1\le p<\infty$, to obtain a sharp restricted norm estimate from $H^1_0$ to $H^1$, and to establish its norm from $H^\infty$ into the Bloch space $\mathcal B$. Our main results are the following
three theorems.

\begin{theorem}\label{thm:main}
Let $b>0$ and $1<p<\infty$.  Then
\begin{equation}\label{eq:main}
\|\mathcal{H}_b\|_{H^p\to H^p}=B\Bigl(\frac1p,b+1-\frac1p\Bigr).
\end{equation}
\end{theorem}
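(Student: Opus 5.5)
We prove the two inequalities separately. For the upper bound we follow the weighted composition operator method of Diamantopoulos--Siskakis, and for the lower bound we test $\mathcal H_b$ on functions that concentrate at the boundary point $1$.

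\textbf{Upper bound.} For $z\in\D$ we deform the segment of integration $[0,1]$ into the path
\[
t=t_s(z)=\frac{s}{(s-1)z+1},\qquad 0\le s\le 1 .
\]
This path lies in $\overline{\D}$ and runs from $0$ to $1$. Along it one computes
\[
1-t=\frac{(1-s)(1-z)}{(s-1)z+1},\qquad 1-tz=\frac{1-z}{(s-1)z+1},\qquad dt=\frac{1-z}{((s-1)z+1)^2}\,ds .
\]
Substituting into \eqref{eq:Hb} gives the representation
\[
\mathcal H_b f(z)=\int_0^1 (1-s)^b\,\frac{1}{(s-1)z+1}\,f\bigl(\phi_s(z)\bigr)\,ds=\int_0^1 T_s f(z)\,ds,
\qquad \phi_s(z)=\frac{s}{(s-1)z+1}.
\]
The deformation is justified by Cauchy's theorem for polynomials $f$, and then extended by density.

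Each $\phi_s$ is a linear fractional self-map of $\D$. The weighted composition operator
\[
T_s f=(1-s)^b\,\bigl((s-1)z+1\bigr)^{-1}\,f\circ\phi_s
\]
is then estimated on $H^p$ by the usual change of variables on $\T$, which reduces to a Poisson-kernel computation. The expected estimate is
\[
\|T_s\|_{H^p\to H^p}\le (1-s)^{b}\, s^{1/p-1}(1-s)^{-1/p}=s^{1/p-1}(1-s)^{b-1/p}.
\]
For $b=0$ this is exactly the classical bound $s^{1/p-1}(1-s)^{-1/p}$. Minkowski's integral inequality then yields
\[
\|\mathcal H_b\|\le\int_0^1 s^{1/p-1}(1-s)^{b-1/p}\,ds=B\Bigl(\frac1p,b+1-\frac1p\Bigr).
\]
The main obstacle is getting the sharp constant $s^{1/p-1}(1-s)^{-1/p}$ for the unweighted factor for all $1<p<\infty$, not only for $p\ge2$. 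The bound obtained from the Littlewood subordination principle together with the Hardy-space norm of the composition operator has an extra $|\cdot|$ factor. For that reason I would follow the method of Dostani\'c--Jevti\'c--Vukoti\'c (or Brevig's) and estimate the combined weighted composition operator exactly. The approach is to write $f\in H^p$ as $f=g^{2/p}$ with $g\in H^2$ for zero-free $f$ (using inner-outer factorization), and then compute the $H^p$-norm as an integral over $\T$ by the substitution $w=\phi_s(e^{i\theta})$. If this fails for small $p$, I would instead use the duality $p\leftrightarrow p'$ together with the adjoint representation of $\mathcal H_b$.

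\textbf{Lower bound.} We take the test functions $f_\varepsilon(z)=(1-z)^{-(1-\varepsilon)/p}$. Their norms satisfy $\|f_\varepsilon\|_p^p\asymp 1/\varepsilon$ as $\varepsilon\to0$. Using the representation above,
\[
T_s f_\varepsilon(z)=(1-s)^{b-(1-\varepsilon)/p}\,\bigl((s-1)z+1\bigr)^{-1+(1-\varepsilon)/p}\,(1-z)^{-(1-\varepsilon)/p}.
\]
Near $z=1$ we have $(s-1)z+1\approx s$, so
\[
\mathcal H_b f_\varepsilon(z)\approx f_\varepsilon(z)\int_0^1 s^{(1-\varepsilon)/p-1}(1-s)^{b-(1-\varepsilon)/p}\,ds .
\]
To make this precise, we fix $\delta>0$ and restrict to $s\ge\delta$ and to arcs $|1-e^{i\theta}|<\eta$, where the factor $|(s-1)z+1|/s$ is uniformly close to $1$. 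The contribution of the complementary region stays bounded as $\varepsilon\to0$, so it is negligible compared with $\|f_\varepsilon\|_p\to\infty$. Applying Fatou's lemma and then letting $\delta,\eta\to0$ gives
\[
\liminf_{\varepsilon\to0}\frac{\|\mathcal H_b f_\varepsilon\|_p}{\|f_\varepsilon\|_p}\ge B\Bigl(\frac1p,b+1-\frac1p\Bigr).
\]
Combined with the upper bound, this proves \eqref{eq:main}.
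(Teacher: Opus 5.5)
\textbf{Where the proposal works.} Your lower bound and your treatment of $p\ge 2$ are essentially the paper's argument. The lower bound uses the same test functions $(1-z)^{-\gamma/p}$, but localizes near $z=1$ instead of comparing hypergeometric coefficients. For $p\ge2$, the substitution $w=\phi_s(\zeta)$ gives exactly
\[
\|T_sf\|_{H^p}^p=s^{1-p}(1-s)^{bp-1}\,\frac{1}{2\pi}\int_{\phi_s(\T)}|f(w)|^p|w|^{p-2}\,|dw| .
\]
The sharp constant then needs two things: $|w|^{p-2}\le 1$, and the inequality $\frac{1}{2\pi}\int_{\phi_s(\T)}|f|^p|dw|\le\|f\|_{H^p}^p$ for the internally tangent circle (the paper's Gabriel lemma). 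A Poisson-majorant computation alone only gives the constant $1+\frac{1}{2-s}$ there.

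\textbf{The gap: the upper bound for $1<p<2$.} Your main plan is the pointwise bound $\|T_s\|_{H^p\to H^p}\le s^{1/p-1}(1-s)^{b-1/p}$, and it is false in this range. For $p<2$ the weight $|w|^{p-2}$ exceeds $1$. Take $f\equiv 1$ and let $s\to0$: the circles $\phi_s(\T)$ tend to $|w-\frac12|=\frac12$, so $\frac{1}{2\pi}\int_{\phi_s(\T)}|w|^{p-2}|dw|\to\frac{1}{2\pi}B(\frac12,\frac{p-1}{2})$, which is about $1.8$ at $p=1.2$. The reverse inequality $\|T_s\|\ge s^{1/p-1}(1-s)^{b-1/p}$ holds for every $s$ (test with functions concentrating at $1$). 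Hence Minkowski's inequality cannot yield $B(\frac1p,b+1-\frac1p)$ there.

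Your fallback is exactly the paper's route. The adjoint is $\mathcal{H}_b^*g(z)=\int_0^1\frac{t^b}{1-(1-t)z}\,g(\phi_t(z))\,dt$, and for $q>2$ the same computation gives $\|\mathcal{H}_b^*\|_{H^q\to H^q}\le B(b+\frac1q,1-\frac1q)=B(\frac1p,b+1-\frac1p)$. Returning to $H^p$, however, requires $\|\mathcal{H}_b\|_{H^p\to H^p}=\|\mathcal{H}_b^*\|_{H^q\to H^q}$, and that identity is not available. The Cauchy pairing identifies $(H^p)^*$ with $H^q$ only isomorphically: the dual norm $\operatorname{dist}_{L^q}(g,\overline{H^q_0})$ satisfies
\[
\operatorname{dist}_{L^q}(g,\overline{H^q_0})\le\|g\|_{H^q}\le\frac{1}{\sin(\pi/p)}\operatorname{dist}_{L^q}(g,\overline{H^q_0}).
\]
So duality only yields $\|\mathcal{H}_b\|_{H^p\to H^p}\le B(\frac1p,b+1-\frac1p)/\sin(\pi/p)$. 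For $b=0$, the paper's introduction attributes the exact norm for all $1<p<\infty$ to a Nehari-type theorem (Dostani\'c--Jevti\'c--Vukoti\'c), not to this duality argument.

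\textbf{The statement fails in part of this range.} The gap cannot be closed, because the claimed equality is false for large $b$ when $p$ is near $1$. Indeed $\mathcal{H}_b1(z)=\int_0^1\frac{u^b}{1-uz}\,du=\sum_{n\ge0}\frac{z^n}{n+b+1}$. Hardy's inequality $\sum_n|\hat F(n)|/(n+1)\le\pi\|F\|_{H^1}$ then gives, for every $p\ge1$,
\[
\|\mathcal{H}_b\|_{H^p\to H^p}\ge\|\mathcal{H}_b1\|_{H^p}\ge\|\mathcal{H}_b1\|_{H^1}\ge\frac1\pi\sum_{n\ge0}\frac{1}{(n+1)(n+b+1)} .
\]
For integer $b$ the last sum equals $\frac{1}{\pi b}\sum_{k=1}^{b}\frac1k$. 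For $b=20$ this is about $0.0573$. On the other hand $B(\frac1p,b+1-\frac1p)\to B(1,20)=\frac1{20}$ as $p\to1^+$, and it is about $0.0518$ at $p=1.01$.

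So what your approach, and the paper's, actually proves is the lower bound for all $1\le p<\infty$ and the equality for $2\le p<\infty$. For $1<p<2$ the statement must be restricted or corrected, and any valid upper bound there needs a mechanism other than pointwise bounds on $T_s$ or non-isometric duality.
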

Since $B(s,1-s)=\pi/\sin(\pi s)$, putting $b=0$ in \eqref{eq:main} gives $B(1/p,1-1/p)=\pi/\sin(\pi/p)$, which is the classical value.

We also consider the endpoint spaces $H^1$ and $H^\infty$.

\begin{theorem}\label{thm:endpoints}
Let $b>0$.
\begin{enumerate}
\item[{\bf (i)}] $\mathcal{H}_b$ is bounded on $H^1$, and
\begin{equation}\label{eq:main1}
\|\mathcal{H}_b\|_{H^1_0\to H^1}=B\Bigl(1,b\Bigr).
\end{equation}
\item[{\bf (ii)}] $\mathcal{H}_b$ is unbounded on $H^\infty$.
\end{enumerate}
\end{theorem}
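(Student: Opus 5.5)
The plan is to write $\mathcal{H}_b$ as an average of weighted composition operators, as is done for the classical Hilbert matrix. For fixed $z\in\D$, I deform the segment $[0,1]$ into the curve $t=\varphi_s(z):=\frac{s}{(s-1)z+1}$, $0\le s\le1$. This curve joins $0$ to $1$ and stays in $\D$ except at the endpoint $1$. A direct computation gives
\[
1-\varphi_s(z)=\frac{(1-s)(1-z)}{1-(1-s)z},\qquad
1-\varphi_s(z)z=\frac{1-z}{1-(1-s)z},\qquad
\frac{d\varphi_s}{ds}=\frac{1-z}{(1-(1-s)z)^2}.
\]
Substituting these into \eqref{eq:Hb} should give
\[
\mathcal{H}_b f(z)=\int_0^1 T_sf(z)\,ds,\qquad T_sf(z):=\frac{(1-s)^b}{1-(1-s)z}\,f(\varphi_s(z)).
\]
The change of contour needs justification by Cauchy's theorem, since the integrand is analytic in $t$. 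I will do this first for $f$ analytic on a neighbourhood of $\overline{\D}$ and then pass to general $f\in H^1$ by density, using the Fej\'er--Riesz inequality to control the original integral.

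For \textbf{(i)}, I estimate $\|T_s\|_{H^1_0\to H^1}$ by a change of variables on the circle. The map $\varphi_s$ is a linear fractional self-map with $\varphi_s(1)=1$. The factor $\frac{1}{1-(1-s)z}$ is, up to constants, the square root of $|\varphi_s'|$ raised to the appropriate power, exactly as in the classical case. I expect the bound $\|T_sf\|_{H^1}\le (1-s)^{b-1}\|f\|_{H^1}$, which is the $p=1$ case of the classical estimate $s^{1/p-1}(1-s)^{-1/p}$ multiplied by $(1-s)^b$. This is where the hypothesis $f(0)=0$ should enter: one writes $f(\varphi_s(z))$ and uses subordination through the point $\varphi_s(0)=s$. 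Integrating the bound gives $\int_0^1(1-s)^{b-1}\,ds=1/b=B(1,b)$, which is the upper bound in \eqref{eq:main1}.

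Boundedness on all of $H^1$ then follows from the splitting $f=f(0)+(f-f(0))$, provided the constant function is handled separately. For that I check directly that
\[
\mathcal{H}_b1(z)=\int_0^1\frac{(1-t)^b}{(1-tz)^{b+1}}\,dt
\]
has only a logarithmic singularity at $z=1$, so it lies in $H^1$.

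For the lower bound I use test functions $f_\varepsilon(z)=z(1-z)^{-1+\varepsilon}$, normalized in $H^1$, with $\varepsilon\to0$. These concentrate at $z=1$, where $\varphi_s$ fixes the boundary point. The heuristic is $f_\varepsilon(\varphi_s(z))\approx (1-s)^{1-\varepsilon}\frac{(1-(1-s)z)^{1-\varepsilon}}{(1-z)^{1-\varepsilon}}$. Consequently $T_sf_\varepsilon\approx (1-s)^{b-\varepsilon}\frac{(1-(1-s)z)^{-\varepsilon}}{(1-z)^{1-\varepsilon}}$, up to lower-order errors. This should yield
\[
\|\mathcal{H}_bf_\varepsilon\|_{H^1}\ge\Bigl(\int_0^1(1-s)^{b-1+O(\varepsilon)}\,ds-o(1)\Bigr)\|f_\varepsilon\|_{H^1}.
\]
This sharpness step is the main obstacle: it requires a careful comparison of $\|T_sf_\varepsilon\|_{H^1}$ with $\|f_\varepsilon\|_{H^1}\sim 1/\varepsilon$. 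Relatedly, it may turn out that the $H^1$ operator bound for $T_s$ also needs the vanishing at $0$ in a more delicate way than sketched above.

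For \textbf{(ii)}, take $f\equiv1\in H^\infty$. For $0<r<1$ the integrand $\frac{(1-t)^b}{(1-tr)^{b+1}}$ is positive and increases to $(1-t)^{-1}$ as $r\to1^-$. By monotone convergence, $\mathcal{H}_b1(r)\to\int_0^1\frac{dt}{1-t}=\infty$. Hence $\mathcal{H}_b1\notin H^\infty$, and $\mathcal{H}_b$ is unbounded on $H^\infty$.
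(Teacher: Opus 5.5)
\textbf{There is a genuine gap: the sharp estimate on $T_s$ for $f\in H^1_0$ is only announced, and subordination cannot deliver it.}

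Several parts of your proposal are fine. Your decomposition $\mathcal{H}_b f=\int_0^1 T_sf\,ds$ is exactly the paper's ($\varphi_s=\phi_s$). Part (ii) is correct; monotone convergence replaces the paper's explicit bound $\mathcal{H}_b1(r)\ge 2^{-b-1}\log\frac{1}{1-r}$. Deducing boundedness on $H^1$ from an $H^1_0$ estimate plus $\mathcal{H}_b1(z)=\sum_{k\ge0}z^k/(k+b+1)\in H^2\subset H^1$ is a legitimate alternative to the paper's Fubini/Fej\'er--Riesz argument.

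The gap is the core estimate $\|T_sf\|_{H^1}\le(1-s)^{b-1}\|f\|_{H^1}$ for $f\in H^1_0$. There is no ``$p=1$ case of the classical estimate''. The change of variables gives
\[
\|T_sf\|_{H^1}=(1-s)^{b-1}\,\frac{1}{2\pi}\int_{\varphi_s(\T)}\frac{|f(w)|}{|w|}\,|dw|,
\]
and the factor $|w|^{-1}\ge1$ (for $p<2$, $|w|^{p-2}\ge1$) is precisely what breaks the argument below $p=2$. Subordination through $\varphi_s(0)=s$ cannot supply the sharp constant either, because Littlewood's principle loses a factor $(1+|\chi(0)|)/(1-|\chi(0)|)$:
\begin{itemize}
\item Applied to $f\circ\varphi_s$, together with $\sup_\T|w_s|=(1-s)^b/s$, it gives $(1+s)(1-s)^{b-1}/s$, which is not even integrable at $s=0$.
\item Applied to the affine parametrization $\chi$ of the circle $\varphi_s(\T)$ (centre $\chi(0)=1/(2-s)$, radius $(1-s)/(2-s)$), it gives only the constant $\frac{3-s}{2-s}>1$. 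Its integral against $(1-s)^{b-1}$ exceeds $1/b$.
\end{itemize}

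\textbf{What is missing.} You need three steps.
\begin{enumerate}
\item Factor $f=zg$ with $\|g\|_{H^1}=\|f\|_{H^1}$.
\item Use the pointwise identity $|\varphi_s(\zeta)|\,|w_s(\zeta)|=(1-s)^{b-1}|\varphi_s'(\zeta)|$. It converts $\|T_s(zg)\|_{H^1}$ into $(1-s)^{b-1}\frac{1}{2\pi}\int_{\varphi_s(\T)}|g(w)|\,|dw|$.
\item Invoke Gabriel's theorem with the sharp constant $1$ for circles in $\D$ (Lemma~\ref{lem:gabriel}, applied to the circle internally tangent at $1$). This bounds the quantity by $(1-s)^{b-1}\|g\|_{H^1}$.
\end{enumerate}
Your boundedness argument on all of $H^1$ is routed through this estimate, so it is also unsupported as written. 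For mere boundedness, steps 1 and 2 plus subordination as above would already suffice.

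\textbf{On the lower bound.} Your $H^1_0$ test functions are better adapted to the statement than the paper's appeal to Theorem~\ref{thm:lower} at $p=1$, whose test functions $(1-z)^{-\gamma}$ do not vanish at $0$. Your heuristic has an exponent slip: $(1-\varphi_s)^{-1+\varepsilon}$ carries $(1-s)^{-1+\varepsilon}$, not $(1-s)^{1-\varepsilon}$. In fact the computation is exact:
\[
T_sf_\varepsilon(z)=s(1-s)^{b-1+\varepsilon}(1-z)^{-1+\varepsilon}\bigl(1-(1-s)z\bigr)^{-1-\varepsilon}.
\]
Hence $\mathcal{H}_bf_\varepsilon=G_\varepsilon\,(1-z)^{-1+\varepsilon}$. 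Here $G_\varepsilon$ is continuous at $z=1$ uniformly in $\varepsilon$, since $|1-(1-s)z|\ge s$ makes the integrand dominated by $s^{-\varepsilon}(1-s)^{b-1+\varepsilon}$. Moreover $G_\varepsilon(1)=B(1-\varepsilon,b+\varepsilon)\to 1/b$. Because $(1-z)^{-1+\varepsilon}$ concentrates its $H^1$ mass at $1$ while its norm tends to infinity, the sharpness step becomes routine.
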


From Theorem \ref{thm:endpoints}, we see that the operator $\mathcal{H}_b:H^\infty\to H^\infty$ is unbounded. If we slightly enlarge the target space, for example the Bloch space, the boundedness can be established.  Finally, we compute the exact norm of $\mathcal{H}_b:H^\infty\to \B$.

\begin{theorem}\label{thm:bloch}
Let $b>0$.  Then
\begin{equation}\label{eq:bloch}
\|\mathcal{H}_b\|_{H^\infty\to\B}=\frac{1}{b+1}+2.
\end{equation}
\end{theorem}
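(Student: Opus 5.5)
The plan is to bound the two pieces of the Bloch norm separately for a general $f\in H^\infty$, and then to check that the constant function $f\equiv1$ attains both bounds at once.

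\textbf{Value at the origin.} From \eqref{eq:Hb} we have
\[
\mathcal H_b f(0)=\int_0^1 f(t)(1-t)^b\,dt,
\]
so $|\mathcal H_b f(0)|\le \|f\|_{H^\infty}/(b+1)$.

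\textbf{Derivative.} Differentiating under the integral sign (legitimate since $f$ is bounded and $z$ stays in a compact subset of $\D$) gives
\[
(\mathcal H_b f)'(z)=(b+1)\int_0^1 f(t)\,\frac{t(1-t)^b}{(1-tz)^{b+2}}\,dt .
\]
Using $|1-tz|\ge 1-t|z|$, with $r=|z|$ we get
\[
(1-|z|^2)|(\mathcal H_b f)'(z)|\le \|f\|_{H^\infty}\,(1-r^2)\,I(r),\qquad I(r):=(b+1)\int_0^1\frac{t(1-t)^b}{(1-tr)^{b+2}}\,dt .
\]

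\textbf{Key identity.} We have
\[
\frac{d}{dt}\Bigl(\frac{1-t}{1-tr}\Bigr)^{b+1}=-(b+1)(1-r)\frac{(1-t)^b}{(1-tr)^{b+2}},
\]
so that
\[
(b+1)\int_0^1\frac{(1-t)^b}{(1-tr)^{b+2}}\,dt=\frac1{1-r}.
\]
Since $t\le1$, this yields $(1-r^2)I(r)\le 1+r<2$. Hence
\[
\|\mathcal H_b f\|_\B\le\Bigl(\frac1{b+1}+2\Bigr)\|f\|_{H^\infty}.
\]

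\textbf{Sharpness.} Take $f\equiv1$, which has $\|f\|_{H^\infty}=1$ and $\mathcal H_b1(0)=1/(b+1)$. For $z=r\in(0,1)$ the derivative is exactly $I(r)$, with no loss from the triangle inequality. Writing $t=1-(1-t)$, we get
\[
(1-r^2)I(r)=(1+r)-(1-r^2)(b+1)\int_0^1\frac{(1-t)^{b+1}}{(1-tr)^{b+2}}\,dt .
\]

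The one step needing care is showing this last term tends to $0$ as $r\to1^-$. Using $1-tr\ge 1-r$ and $1-tr\ge1-t$, the integrand is at most $\min\{(1-r)^{-1},(1-t)^{-1}\}$. Splitting the integral at $1-t=1-r$ shows it is $O(1+\log\frac1{1-r})$. Multiplying by $1-r^2$ then gives a quantity that tends to $0$.

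Therefore $\sup_z(1-|z|^2)|(\mathcal H_b1)'(z)|=2$, and
\[
\|\mathcal H_b1\|_\B=\frac1{b+1}+2,
\]
which gives the reverse inequality and proves \eqref{eq:bloch}.
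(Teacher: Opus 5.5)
Your proof is correct and follows the paper's argument: you split into $|\mathcal H_b f(0)|$ and the derivative term, bound the same integral, and use the same extremal function $f\equiv 1$. Your antiderivative $\bigl(\frac{1-t}{1-tr}\bigr)^{b+1}$ is the paper's substitution $u=(1-t)/(1-tr)$ in another form, and your bound $(1-r^2)I(r)\le 1+r$ together with the explicit $O\bigl((1-r)\log\frac{1}{1-r}\bigr)$ error estimate validly replaces the paper's monotonicity-of-$I_b$ and dominated-convergence step.
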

Our contribution advances this line of research by determining the \emph{exact} norm of $\mathcal H_b$ on $H^p$ for the range $1<p<\infty$, by obtaining a sharp restricted norm estimate from $H^1_0$ to $H^1$, and by establishing $\|\mathcal H_b\|_{H^\infty\to\mathcal B}$, all of which were not previously known.

The proofs of Theorems \ref{thm:main}--\ref{thm:bloch} are given in the sections below.  The main technical tool is the representation of $\mathcal{H}_b$ as an average of weighted composition operators, together with a sharp change-of-variables formula (Lemma \ref{lem:wco}) and Gabriel's classical theorem (Lemma \ref{lem:gabriel}). Throughout the paper, $C$ denotes a positive absolute constant, the value of which may vary from line to line.

\smallskip

\section{Preliminaries}
\label{sec:prelim}

 In this section, we state some necessary definitions and auxiliary lemmas, which will be used in this paper.

\subsection{Weighted composition representation}

For $b>0$ and $0<t<1$,  define the symbol
\begin{equation}\label{eq:phit}
\phi_t(z)=\frac{t}{1-(1-t)z},\qquad z\in\D,
\end{equation}
and the weight
\begin{equation}%\label{eq:psi}
w_t(z)=\frac{(1-t)^b}{1-(1-t)z},\qquad z\in\D.\nonumber
\end{equation}
The associated weighted composition operator is
\[
T_t f(z)=w_t(z)\,f(\phi_t(z)),\qquad f\in H(\D).
\]
A direct change of variables in \eqref{eq:Hb} shows that
\begin{equation}\label{eq:WCO}
\mathcal{H}_b f(z)=\int_0^1\,T_t f(z)\,dt,
\qquad z\in\D.
\end{equation}
Indeed, substituting $x=\phi_t(z)$ in the integral in \eqref{eq:WCO} gives
$t=x(1-z)/(1-xz)$, $1-t=(1-x)/(1-xz)$, $w_t(z)=(1-xz)/(1-z)$ and
$dt=(1-z)/(1-xz)^2\,dx$, whence
\[
\int_0^1w_t(z)\,f(\phi_t(z))\,dt=\int_0^1\frac{(1-x)^b}{(1-xz)^{b+1}}\,f(x)\,dx.
\]
The map $\phi_t$ sends $\D$ onto the disc $D(a_t,r_t)$ with centre $a_t=1/(2-t)$ and radius $r_t=(1-t)/(2-t)$; in particular $\phi_t(\D)\subset\D$ and $\phi_t(1)=1$, so that $\phi_t(\T)$ is the circle tangent to $\T$ at the point $1$.

\subsection{Special functions}

The Beta function is
\[
B(s,t)=\int_0^1 x^{s-1}(1-x)^{t-1}\,dx
=\frac{\Gamma(s)\Gamma(t)}{\Gamma(s+t)},
\qquad \Real s,\,\Real t>0.
\]
It satisfies $B(s,t)=B(t,s)$ and $B(s,1-s)=\pi/\sin(\pi s)$ for $0<s<1$.

The Gaussian hypergeometric function is
\[
F(a,b,c;z)=\sum_{k=0}^\infty\frac{(a)_k(b)_k}{(c)_k}\frac{z^k}{k!},
\qquad |z|<1,
\]
where $(a)_k=\Gamma(a+k)/\Gamma(a)$ is the Pochhammer symbol.  We shall use the Euler integral representation
\begin{equation}\label{eq:Euler}
F(a,b,c;z)=\frac{1}{B(a,c-a)}\int_0^1 x^{a-1}(1-x)^{c-a-1}(1-xz)^{-b}\,dx,
\end{equation}
valid for $\Real c>\Real a>0$.

\subsection{Hardy Spaces}

For $1\le p<\infty$,  every $f\in H^p$ has radial boundary values $f^*(\zeta)$ for a.e.~$\zeta\in\T:= \partial \mathbb{D}$, and
\begin{equation}\label{eq:boundary}
\|f\|_{H^p}^p=\frac{1}{2\pi}\int_\T |f^*(\zeta)|^p\,|d\zeta|.
\end{equation}
When no confusion arises, we shall denote both $f$ and its boundary function $f^*$ by the same symbol $f$.

For $1<p<\infty$ the dual space $(H^p)^*$ is isomorphic to $H^q$, where $1/p+1/q=1$, under the Cauchy pairing
\begin{equation}\label{eq:pairing}
\langle f,g\rangle=\frac{1}{2\pi}\int_\T f(\zeta)\,\overline{g(\zeta)}\,|d\zeta|,
\qquad f\in H^p,\ g\in H^q.
\end{equation}
Consequently, if $T:H^p\to H^p$ is bounded, then its adjoint $T^*:H^q\to H^q$ satisfies $$\|T^*\|_{H^q\to H^q}=\|T\|_{H^p\to H^p}.$$

We also need the Fej\'er--Riesz inequality: there is an absolute constant $C>0$ such that
\begin{equation}\label{eq:FR}
\int_0^1 |f(x)|\,dx\le C\,\|f\|_{H^1}
\end{equation}
for every $f\in H^1$; see Duren \cite[Theorem~3.13]{Duren70}.

\subsection{Some lemmas}

The next lemma is the key technical identity.  It gives the exact $H^p$-norm of the weighted composition operator $T_t$ as an integral over the inner circle $\phi_t(\T)$.

\begin{lemma}\label{lem:wco}
Let $1\le p<\infty$, $f\in H^p$ and $0<t<1$.  Then
\begin{equation}%\label{eq:wco}
\|T_t f\|_{H^p}
=\frac{t^{\frac{1}{p}-1}}{(1-t)^{\frac{1}{p}-b}} \left(\frac{1}{2\pi} \int_{\phi_t(\T)} |f(w)|^p |w|^{p-2} |dw| \right)^{\frac{1}{p}}.\nonumber
\end{equation}
\end{lemma}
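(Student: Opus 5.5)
The plan is to compute the $H^p$-norm of $T_tf$ directly from its boundary values using \eqref{eq:boundary}, and then change variables $w=\phi_t(\zeta)$ to move the integral from $\T$ to the circle $\phi_t(\T)$. The identity should come out exactly; no inequality is involved.

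\emph{Step 1: $T_tf\in H^p$ and its boundary function.} Since $|1-(1-t)z|\ge t$ on $\overline{\D}$, the weight $w_t$ is bounded and analytic on a neighbourhood of $\overline{\D}$. By Littlewood's subordination principle, $f\circ\phi_t\in H^p$, and hence $T_tf\in H^p$. The map $\phi_t$ extends analytically across $\T$, and $\phi_t(\overline{\D}\setminus\{1\})\subset\D$, because $\phi_t(\T)$ is internally tangent to $\T$ only at $1$. Consequently $T_tf$ extends continuously to $\overline{\D}\setminus\{1\}$. Its radial limit at every $\zeta\in\T\setminus\{1\}$ is therefore simply $w_t(\zeta)f(\phi_t(\zeta))$, with $f$ evaluated at interior points. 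Applying \eqref{eq:boundary} gives
\[
\|T_tf\|_{H^p}^p=\frac1{2\pi}\int_\T |w_t(\zeta)|^p\,|f(\phi_t(\zeta))|^p\,|d\zeta|.
\]
On the right-hand side, $|f|$ on $\phi_t(\T)\setminus\{1\}$ means the values of $f$ at points of $\D$. This is also how the integral in the statement is to be read, since the single point $1$ has measure zero.

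\emph{Step 2: change of variables.} Put $w=\phi_t(\zeta)$. Inverting gives $1-(1-t)\zeta=t/w$, that is, $\zeta=(w-t)/((1-t)w)$. Differentiating,
\[
\frac{d\zeta}{dw}=\frac{t}{(1-t)w^2}, \qquad\text{so}\qquad |d\zeta|=\frac{t}{(1-t)|w|^2}\,|dw|.
\]
The map $\zeta\mapsto w$ is a bijection of $\T$ onto the circle $\phi_t(\T)$, which lies in $\{|w|\ge t/(2-t)\}$. Hence $w$ stays away from $0$ and the Jacobian computation is legitimate. For the weight,
\[
w_t(\zeta)=\frac{(1-t)^b}{1-(1-t)\zeta}=\frac{(1-t)^b\,w}{t},
\qquad\text{so}\qquad |w_t(\zeta)|^p=(1-t)^{bp}t^{-p}|w|^p.
\]
Multiplying the two factors gives
\[
\|T_tf\|_{H^p}^p=t^{1-p}(1-t)^{bp-1}\,\frac1{2\pi}\int_{\phi_t(\T)}|f(w)|^p|w|^{p-2}\,|dw|.
\]
Taking $p$-th roots yields the prefactor $t^{1/p-1}(1-t)^{b-1/p}=t^{\frac1p-1}/(1-t)^{\frac1p-b}$, as claimed.

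\emph{Main obstacle.} The only delicate point is Step 1: identifying the boundary function of $T_tf$ with $w_t\cdot(f\circ\phi_t)$ on $\T$, and justifying \eqref{eq:boundary} for it. The plan is to rely on the continuity of $T_tf$ on $\overline{\D}\setminus\{1\}$ and on membership in $H^p$ via subordination. This avoids any discussion of non-tangential limits of $f$ at the tangency point.
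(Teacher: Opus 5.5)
Your proposal is correct and follows the paper's route: both write $\|T_tf\|_{H^p}^p$ as the boundary integral of $|w_t|^p|f\circ\phi_t|^p$ over $\T$ and change variables $w=\phi_t(\zeta)$ using $w_t(\zeta)=(1-t)^b w/t$. You compute the Jacobian through the inverse map, whereas the paper uses the identity $|\phi_t'|=t(1-t)^{1-2b}|w_t|^2$. Your Step 1 also supplies a justification the paper omits: that $T_tf\in H^p$, and that its boundary function is $w_t\cdot f\circ\phi_t$ at every point of $\T$ other than the point $1$.
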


\begin{proof}
Differentiating both sides of \eqref{eq:phit} with respect to $z$ and taking absolute values, we obtain
 \begin{align} %\label{eqp}
  |\phi_t'(z)|=\frac{t}{(1-t)^{2b-1}}|w_t(z)|^2,\,\,\,0 < t < 1,\,\,\,z \in \mathbb{D}.\nonumber
  \end{align}
 Applying a change of variables, we get
\begin{align*}
\|T_t f\|_{H^p}
&=  \left(\frac{1}{2\pi}  \int_{\mathbb{T}} |f(\phi_t(\zeta))|^p |w_t(\zeta)|^p  |d\zeta| \right)^{\frac{1}{p}} \notag \\
&=\left[\frac{1}{2\pi}\frac{1}{ t(1-t)^{1-2b}} \int_{\mathbb{T}} |f(\phi_t(\zeta))|^p |w_t(\zeta)|^{p-2} \frac{t}{(1-t)^{2b-1}}|w_t(z)|^2|d\zeta|\right]^{\frac{1}{p}} \notag \\
&= \frac{t^{-\frac{1}{p}}}{(1-t)^{\frac{1}{p}-\frac{2b}{p}}} \left( \frac{1}{2\pi}\int_{\mathbb{T}} |f(\phi_t(\zeta))|^p |w_t(\zeta)|^{p-2} |\phi_t'(\zeta)| |d\zeta| \right)^{\frac{1}{p}} \notag \\
&= \frac{t^{\frac{1}{p}-1}}{(1-t)^{\frac{1}{p}-b}} \left(\frac{1}{2\pi} \int_{\phi_t(\T)} |f(w)|^p |w|^{p-2} |dz| \right)^{\frac{1}{p}}.
\end{align*}
Here the last equality follows from the fact that
\[
w_t(\phi_t^{-1}(z)) = \frac{z}{t} (1 - t)^b.
\]
 The proof is complete.
\end{proof}

The second ingredient is Gabriel's classical theorem on integrals of the moduli of analytic functions along circles inside the unit disc.  We formulate it in the form used below; see Gabriel \cite{Gabriel1928} or Frazer \cite{Frazer1934} for the sharp constant $1$ when the curve is a circle.

\begin{lemma}[Gabriel]\label{lem:gabriel}
Let $f\in H^p$, $1\le p<\infty$, and let $\gamma$ be a circle contained in $\D$.  Then
\[
\frac{1}{2\pi}\int_\gamma |f(\zeta)|^p\,|d\zeta|
\le \frac{1}{2\pi}\int_\T |f(\zeta)|^p\,|d\zeta|
=\|f\|_{H^p}^p.
\]
\end{lemma}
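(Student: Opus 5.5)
The plan is to dominate $|f|^p$ by a harmonic function and then integrate that majorant over $\gamma$ exactly; carried out, this shows the inequality cannot hold with constant $1$ for every circle. I expect to write $\gamma=\{|z-a|=r\}$ with $|a|+r<1$. Since $p\ge1$, $|f|^p$ is subharmonic and has the least harmonic majorant
\[
u(z)=\frac1{2\pi}\int_\T P(z,\zeta)\,|f(\zeta)|^p\,|d\zeta|,\qquad P(z,\zeta)=\frac{1-|z|^2}{|\zeta-z|^2}.
\]
Fubini then gives
\[
\frac1{2\pi}\int_\gamma |f|^p\,|dz|\le \frac1{2\pi}\int_\T |f(\zeta)|^p\Bigl(\frac1{2\pi}\int_\gamma P(z,\zeta)\,|dz|\Bigr)|d\zeta| .
\]
So everything reduces to bounding the kernel $K_\gamma(\zeta)=\frac1{2\pi}\int_\gamma P(z,\zeta)|dz|$ uniformly in $\zeta\in\T$.

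Because $\overline{D(a,r)}\subset\D$, the function $P(\cdot,\zeta)$ is harmonic on a neighbourhood of the closed disc. The mean value property gives $K_\gamma(\zeta)=r\,P(a,\zeta)$ exactly. Its maximum is attained at $\zeta=a/|a|$ and equals $r(1+|a|)/(1-|a|)$. So this argument yields the stated inequality only when $r\le (1-|a|)/(1+|a|)$; for instance it covers circles centred at the origin, where $K_\gamma\equiv r<1$. For other circles the argument gives at best the constant $\sup_{\zeta\in\T} K_\gamma(\zeta)=r(1+|a|)/(1-|a|)$, which tends to $1+|a|\le 2$ as $\gamma$ approaches internal tangency.

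\textbf{Main obstacle: constant $1$ fails for general circles.} This is not merely a weakness of the method. Take $p=2$, $a=1/2$, $r=0.45$, and $f_\rho(z)=(1-\rho^2)^{1/2}/(1-\rho z)$, so that $\|f_\rho\|_{H^2}=1$. As $\rho\to1$, $|f_\rho|^2$ converges locally uniformly on $\overline{D(a,r)}$ to $0$, while $|f_\rho(\zeta)|^2|d\zeta|/(2\pi)$ converges weakly to $\delta_1$ on $\T$. Since $|f_\rho(z)|^2=\frac{1-\rho^2}{|1-\rho z|^2}$ is close to $P(\rho z,1)$, I would check that
\[
\frac1{2\pi}\int_\gamma |f_\rho|^2\,|dz|\;\longrightarrow\; r\,P(a,1)=0.45\cdot 3=1.35>1 .
\]
Thus Lemma~\ref{lem:gabriel} with constant $1$ is false for arbitrary circles in $\D$.

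What can actually be proved, with the Poisson–mean-value computation above as the proof, is
\[
\frac1{2\pi}\int_\gamma|f|^p|dz|\le \frac{r(1+|a|)}{1-|a|}\,\|f\|_{H^p}^p\le 2\|f\|_{H^p}^p .
\]
This matches Gabriel's theorem with constant $2$ for convex curves. In particular, for the tangent circles $\phi_t(\T)$ used later in the paper, the kernel equals $r_tP(a_t,1)$ near the tangency point, which is $\tfrac{2-t}{2}\cdot\tfrac{1-t}{1}\cdot\tfrac{2-t}{1-t}\cdot\tfrac1{2-t}>1$. So any use of the lemma for these circles must absorb an extra constant, not the sharp constant $1$.
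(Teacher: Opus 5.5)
Your proposal does not prove the lemma. It concludes that the lemma is false, and that conclusion is wrong.

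The counterexample contradicts itself. You note that $|f_\rho|^2\to0$ uniformly on $\overline{D(a,r)}$, and that already forces $\frac1{2\pi}\int_\gamma|f_\rho|^2\,|dz|\to0$, not $1.35$. The claim that $|f_\rho(z)|^2=(1-\rho^2)/|1-\rho z|^2$ is close to $P(\rho z,1)=(1-\rho^2|z|^2)/|1-\rho z|^2$ is false at interior points. Only the least harmonic majorant of $|f_\rho|^2$ tends to $P(\cdot,1)$. In fact, for $a\ge0$, $a+r\le1$ and real $\lambda\in(-1,1)$ one computes exactly
\[
\frac1{2\pi}\int_\gamma\frac{1-\lambda^2}{|1-\lambda z|^2}\,|dz|=\frac{r(1-\lambda^2)}{\bigl(1-\lambda(a+r)\bigr)\bigl(1-\lambda(a-r)\bigr)}.
\]
For $a=1/2$, $r=0.45$ this stays below $0.72$ and tends to $0$ as $\lambda\to1$.

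The step that fails is the harmonic-majorant reduction. For nonnegative subharmonic functions, even positive harmonic ones, the constant $r(1+|a|)/(1-|a|)$ is genuinely sharp. So subharmonicity of $|f|^p$ alone can never give $1$; analyticity has to be used. The paper does not prove the lemma itself. It quotes Gabriel and Frazer for the sharp constant $1$ on circles, and it reaches the tangent circle $\phi_t(\T)$ as a limit of the circles $\phi_t(r\T)$.

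Here is one way to supply the missing idea.

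First reduce to $p=2$. Write $f=Bg$ with $B$ a Blaschke product and $g$ zero-free. Then $|f|^p\le|g^{p/2}|^2$ in $\D$ and $\|g^{p/2}\|_{H^2}^2=\|f\|_{H^p}^p$.

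Now take $h\in H^2$ and $\gamma=\{|z-a|=r\}$, rotated so that $a\ge0$. Then $\frac1{2\pi}\int_\gamma|h|^2|dz|=r\|h\circ\psi\|_{H^2}^2$ with $\psi(z)=a+rz=\psi_2(\psi_1(z))$, where $\psi_1(z)=\frac{r}{1-a}z$ and $\psi_2(w)=a+(1-a)w$. Littlewood's subordination principle gives $\|C_{\psi_1}\|\le1$.

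For $\psi_2$, use the semigroup $\varphi_t(z)=e^{-t}z+1-e^{-t}$, whose generator is $Ah=(1-z)h'$. For $h=\sum c_nz^n$,
\[
\Real\langle Ah,h\rangle=\Real\sum_{n\ge0}(n+1)c_{n+1}\overline{c_n}-\sum_{n\ge0}n|c_n|^2\le\frac12\|h\|_{H^2}^2 .
\]
Differentiate $\|h\circ\varphi_t\|_{H^2}^2$ for polynomials $h$, whose degree $C_{\varphi_t}$ does not raise. This gives $\|C_{\varphi_t}\|^2\le e^t$, that is, $\|C_{\psi_2}\|^2\le1/(1-a)$.

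Combining, $\frac1{2\pi}\int_\gamma|h|^2|dz|\le\frac{r}{1-a}\|h\|_{H^2}^2\le\|h\|_{H^2}^2$, which is the lemma. For tangent circles ($a+r=1$) the kernel computation above tends to $1$ as $\lambda\to1$, so the constant $1$ is sharp. Your closing claim that applications to $\phi_t(\T)$ must absorb a constant larger than $1$ is therefore also incorrect.
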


Strictly speaking, Gabriel's theorem is usually stated for circles strictly inside $\D$; the case of the tangent circle $\phi_t(\T)$ follows by applying the theorem to the slightly smaller circles $\phi_t(r\T)$, $r\rightarrow1$, and using the $H^p$ boundary representation \eqref{eq:boundary}.

\section{ Lower bound for $1\leq p<\infty$  }
\label{sec:lower}

We now establish the matching lower bound of $\|\mathcal{H}_b\|_{H^p\to H^p}$ for every $1\leq p<\infty$.

\begin{theorem}\label{thm:lower}
Let $b>0$ and $1\leq p<\infty$.  Then
\[
\|\mathcal{H}_b\|_{H^p\to H^p}\ge B\Bigl(\frac1p,b+1-\frac1p\Bigr).
\]
\end{theorem}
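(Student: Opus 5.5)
Plan: use the standard test functions $f_\varepsilon(z)=(1-z)^{-\frac{1}{p}+\varepsilon}$ (for small $\varepsilon>0$), for which $\|f_\varepsilon\|_{H^p}^p\asymp 1/(p\varepsilon)$ blows up as $\varepsilon\to0$. Work through the weighted composition representation \eqref{eq:WCO}. Since $\phi_t(z)=t/(1-(1-t)z)$, we have $1-\phi_t(z)=(1-t)(1-z)/(1-(1-t)z)$. Hence
\[
T_tf_\varepsilon(z)=(1-t)^{b-\frac1p+\varepsilon}\,(1-(1-t)z)^{\frac1p-\varepsilon-1}\,f_\varepsilon(z).
\]
Therefore
\[
\mathcal H_b f_\varepsilon(z)=f_\varepsilon(z)\,G_\varepsilon(z),\qquad G_\varepsilon(z)=\int_0^1 (1-t)^{b-\frac1p+\varepsilon}(1-(1-t)z)^{\frac1p-\varepsilon-1}\,dt .
\]
By \eqref{eq:Euler} with $s=1-t$, $G_\varepsilon$ is a hypergeometric function. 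Its value at $z=1$ is
\[
G_\varepsilon(1)=\int_0^1 s^{b-\frac1p+\varepsilon}(1-s)^{\frac1p-\varepsilon-1}\,ds=B\Bigl(b+1-\frac1p+\varepsilon,\ \frac1p-\varepsilon\Bigr),
\]
which tends to $B(\frac1p,b+1-\frac1p)$ as $\varepsilon\to0$. Also, $G_\varepsilon$ is continuous on $\overline{\D}$ near $z=1$, uniformly in small $\varepsilon$, because $b>0$ and $1-\frac1p\ge0$ keep the integrand integrable.

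Key steps, in order. First, verify the factorization above; this is a direct computation. Second, fix $\delta>0$ and choose an arc $I_\delta\subset\T$ around $1$ on which $|G_\varepsilon(\zeta)|\ge B(\frac1p,b+1-\frac1p)-\delta$ for all small $\varepsilon$. This uses uniform continuity of $G_\varepsilon(\zeta)$ in $(\zeta,\varepsilon)$, obtained by dominated convergence on the integral. Third, use the lower bound
\[
\|\mathcal H_bf_\varepsilon\|_{H^p}^p\ge\frac1{2\pi}\int_{I_\delta}|f_\varepsilon|^p|G_\varepsilon|^p|d\zeta|\ge \bigl(B-\delta\bigr)^p\frac1{2\pi}\int_{I_\delta}|f_\varepsilon|^p|d\zeta| .
\]
Fourth, note that $\int_{\T\setminus I_\delta}|f_\varepsilon|^p|d\zeta|$ stays bounded as $\varepsilon\to0$, while $\|f_\varepsilon\|_{H^p}^p\to\infty$. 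So the arc carries asymptotically all of the norm, and dividing gives $\liminf_{\varepsilon\to 0}\|\mathcal H_bf_\varepsilon\|/\|f_\varepsilon\|\ge B-\delta$. Finally let $\delta\to0$.

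The main obstacle is the uniform control of $G_\varepsilon$ near $\zeta=1$, especially in the case $p=1$. There the factor $(1-(1-t)z)^{-\varepsilon}$ has exponent near $0$, so the integrand is bounded by $C\,s^{b-1}$, which is integrable since $b>0$; dominated convergence then applies. For $p>1$, the bound $|1-(1-t)\zeta|\ge t$ gives the domination $s^{b-\frac1p}(1-s)^{\frac1p-1-\varepsilon}$, which is integrable for $\varepsilon<\frac1p$. The boundedness of the remainder off $I_\delta$ is routine, since $f_\varepsilon$ is bounded there uniformly in $\varepsilon$. For $p=1$ one also needs $f_\varepsilon=(1-z)^{-1+\varepsilon}\in H^1$, which holds for $\varepsilon>0$.
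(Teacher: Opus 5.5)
Your proof is correct, and it takes a genuinely different route from the paper's.

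\textbf{How the two routes differ.} The paper uses the same test functions: its $f_\gamma$ is your $f_\varepsilon$ with $\gamma=1-p\varepsilon$. It expands $\mathcal{H}_bf_\gamma=B(1,b+1-\gamma/p)\,F(1,b+1,b+2-\gamma/p;z)$ as a power series and compares its Taylor coefficients with those of $f_\gamma$ via Stirling's formula. This gives an additive decomposition $\mathcal{H}_bf_\gamma=B(\gamma/p,b+1-\gamma/p)\,(f_\gamma+g_\gamma)$, where $g_\gamma$ is asserted to be bounded in $H^\infty$ uniformly in $\gamma$. You instead use $1-\phi_t(z)=(1-t)(1-z)/(1-(1-t)z)$ to obtain the exact multiplicative identity $\mathcal{H}_bf_\varepsilon=f_\varepsilon G_\varepsilon$. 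You then localize at $\zeta=1$, where the $H^p$ mass of $f_\varepsilon$ concentrates and $G_\varepsilon(1)\to B(\frac1p,b+1-\frac1p)$.

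\textbf{What each route buys.} Your route needs only dominated convergence and continuity. The paper's uniform bound on $g_\gamma$ needs Stirling remainders that are uniform in $\gamma$. Moreover, at $p=1$ a coefficient bound of the form $O(k^{\gamma-2})$ only gives $\|g_\gamma\|_{H^\infty}=O((1-\gamma)^{-1})$, which is the same order as $\|f_\gamma\|_{H^1}$. So there one must also use that the $1/k$ term of the coefficient ratio equals $-(1-\gamma)(b+1-\gamma)/k$; your argument avoids this entirely. Your identity also explains why the constant is sharp. The value $G_0(1)=\int_0^1 t^{1/p-1}(1-t)^{b-1/p}\,dt$ has exactly the weight that bounds $\|T_t\|$ in Proposition~\ref{prop:upper}. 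Hence near $\zeta=1$ your test functions make Minkowski's inequality asymptotically an equality. The paper's approach, in turn, works purely with Taylor coefficients and never uses the weighted composition structure, which fits the Hausdorff-matrix viewpoint.

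\textbf{A small inaccuracy at $p=1$.} The integrand $s^{b-1+\varepsilon}(1-s\zeta)^{-\varepsilon}$ is not bounded by $Cs^{b-1}$, because $|1-s\zeta|^{-\varepsilon}$ reaches $(1-s)^{-\varepsilon}$ at $\zeta=1$. This is harmless. Take the majorant you give for $p>1$ and replace $\varepsilon$ by a fixed $\varepsilon_0<1/p$, so that it does not depend on $\varepsilon\in[0,\varepsilon_0]$: namely $s^{b-1/p}(1-s)^{1/p-1-\varepsilon_0}$. This is integrable for $p=1$ as well. It yields the joint continuity of $(\zeta,\varepsilon)\mapsto G_\varepsilon(\zeta)$ on $\overline{\D}\times[0,\varepsilon_0]$, which is what your localization step actually uses.
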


\begin{proof}
For $0<\gamma<1$ consider the test functions
\begin{equation}\label{eq:test}
f_\gamma(z)=(1-z)^{-\gamma/p},\qquad z\in\D.
\end{equation}
A classical fact about power kernels is that $f_\gamma\in H^p$ for every $\gamma<1$ and that $\|f_\gamma\|_{H^p}\to\infty$ as $\gamma\to1^-$; see Duren \cite[Chapter~5]{Duren70}.

Using the integral representation from (\ref{eq:Hb}), together with (\ref{eq:Euler}), we have
\[
\mathcal{H}_bf_\gamma(z) = \int_0^1 \frac{(1-t)^{b-\gamma/p}}{(1-tz)^{b+1}}d  t
= B\!\left(1, b+1-\frac{\gamma}{p}\right)
  \,F\!\left(1, b+1, b+2-\frac{\gamma}{p}; z\right).
\]
The hypergeometric series expansion then takes the form
\[
\mathcal{H}_bf_\gamma(z)
= \frac{\Gamma\!\left(\frac{\gamma}{p}\right)\Gamma\!\left(b+1-\frac{\gamma}{p}\right)}{\Gamma\!\left(b+1\right)}
  \sum_{k=0}^{\infty}
  \frac{\Gamma(k+1)\Gamma(k+b+1)}
       {\Gamma\!\left(k+b+2-\frac{\gamma}{p}\right)
        \Gamma\!\left(k+\frac{\gamma}{p}\right)}
  \frac{\Gamma\!\left(k+\frac{\gamma}{p}\right)}
       {\Gamma\!\left(\frac{\gamma}{p}\right)}
  \frac{z^k}{k!}.
\]
An application of Stirling's formula yields
\[
\frac{\Gamma(k+1)\Gamma(k+b+1)}
     {\Gamma\!\left(k+b+2-\frac{\gamma}{p}\right)
      \Gamma\!\left(k+\frac{\gamma}{p}\right)}
= 1 + O\!\left(\frac{1}{k+1}\right), \qquad k\to\infty.
\]
Consequently,

\begin{align}
\mathcal{H}_b(f_\gamma)(z)
&= B\!\left(\frac{\gamma}{p},\,b+1-\frac{\gamma}{p}\right)
   \sum_{k=0}^{\infty}
   \left[1+O\!\left(\frac{1}{k+1}\right)\right]
   \frac{\Gamma\!\left(k+\frac{\gamma}{p}\right)}
        {\Gamma\!\left(\frac{\gamma}{p}\right)}
   \frac{ z^k}{k!} \nonumber \\
&= B\!\left(\frac{\gamma}{p},\,b+1-\frac{\gamma}{p}\right)
   \left(f_\gamma(z) + g_\gamma(z)\right), \label{eq:Hb_fgamma}
\end{align}

where the error term $g_\gamma$ satisfies, since $\gamma<p$,
\[
\sup_{0<\gamma<1}\|g_\gamma\|_{H^\infty} \le C < \infty.
\]
Finally, since \(H^\infty\subset H^p\) on the unit disk, we obtain
\[
\|g_\gamma\|_{H^p} \le \|g_\gamma\|_{H^\infty} <\infty.
\]
Therefore, taking the norm on both sides of \eqref{eq:Hb_fgamma} and then passing to the limit as $\gamma \to 1^-$, we establish the lower bound

\[
\begin{aligned}
\|\mathcal{H}_b\|_{H^p \to H^p}
&\geq \lim_{\gamma \to 1^-} \left[B\!\left(\frac{\gamma}{p},\,b+1-\frac{\gamma}{p}\right)
\frac{\|f_\gamma\|_{H^p} - \|g_\gamma\|_{H^p}}{\|f_\gamma\|_{H^p}} \right]\\
&=B\!\left(\frac{1}{p},\,b+1-\frac{1}{p}\right).
\end{aligned}
\]
This completes the proof.
\end{proof}

\section{ Upper bound for $1< p<\infty$ }
\label{sec:upper}

In this section we first prove the upper bound
\begin{equation}\label{eq:upper}
\|\mathcal{H}_b\|_{H^p\to H^p}\le B\Bigl(\frac1p,b+1-\frac1p\Bigr),
\qquad 2\le p<\infty.
\end{equation}
When $p\ge 2$, this upper bound is usually the easiest to obtain. Although the result has already been established in \cite{Li2009}, we provide a different proof here for the sake of completeness.

\begin{proposition}\label{prop:upper}
Let $b>0$ and $2\leq p<\infty$.  Then \eqref{eq:upper} holds.
\end{proposition}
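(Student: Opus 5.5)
The plan is to combine the weighted composition representation \eqref{eq:WCO} with Minkowski's integral inequality, Lemma \ref{lem:wco}, and Gabriel's theorem (Lemma \ref{lem:gabriel}). For $f\in H^p$, \eqref{eq:WCO} and Minkowski's inequality for Bochner integrals (or, concretely, applied to the integral means on circles $|z|=r$ before letting $r\to1$) give
\[
\|\mathcal H_b f\|_{H^p}\le \int_0^1 \|T_t f\|_{H^p}\,dt .
\]
The task then reduces to bounding each $\|T_tf\|_{H^p}$ by an explicit weight in $t$ times $\|f\|_{H^p}$, with the weight integrating to exactly $B(1/p,b+1-1/p)$.

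For the pointwise bound, Lemma \ref{lem:wco} expresses $\|T_tf\|_{H^p}$ through the integral of $|f(w)|^p|w|^{p-2}$ over the circle $\phi_t(\T)\subset\overline{\D}$. This is where the hypothesis $p\ge2$ enters: $|w|\le1$ on $\phi_t(\T)$, so $|w|^{p-2}\le 1$. Dropping this factor and applying Lemma \ref{lem:gabriel}, in its tangent-circle form justified after that lemma, yields
\[
\|T_tf\|_{H^p}\le \frac{t^{\frac1p-1}}{(1-t)^{\frac1p-b}}\,\|f\|_{H^p}
= t^{\frac1p-1}(1-t)^{b-\frac1p}\,\|f\|_{H^p}.
\]

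Integrating in $t$ gives
\[
\int_0^1 t^{\frac1p-1}(1-t)^{b-\frac1p}\,dt=B\Bigl(\frac1p,\,b+1-\frac1p\Bigr),
\]
which converges since $1/p>0$ and $b+1-1/p>0$. This is exactly \eqref{eq:upper}.

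The main point requiring care is the first step: the Minkowski interchange must be legitimate, and $\int_0^1 T_tf\,dt$ must converge in $H^p$. I would handle this by working on circles of radius $r<1$, where $T_tf(re^{i\theta})$ is jointly measurable in $(t,\theta)$. Minkowski's inequality gives $M_p(r,\mathcal H_bf)\le\int_0^1 M_p(r,T_tf)\,dt\le\int_0^1\|T_tf\|_{H^p}\,dt$, and taking the supremum over $r$ finishes the argument. The bound $|w|^{p-2}\le1$ is the only place $p\ge2$ is used; for $1<p<2$ this factor blows up near $w=0$, although $\phi_t(\T)$ stays away from $0$ when $t$ is near $1$. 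That case presumably needs duality via \eqref{eq:pairing}, which is not required here.
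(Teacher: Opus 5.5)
Your proposal is correct and follows the paper's proof step for step. Both arguments apply Minkowski's inequality to the representation \eqref{eq:WCO}, use Lemma~\ref{lem:wco}, drop the factor $|w|^{p-2}\le 1$ on $\phi_t(\T)$ (the only place $p\ge 2$ is needed), invoke Lemma~\ref{lem:gabriel}, and integrate $t^{1/p-1}(1-t)^{b-1/p}$ to obtain $B\bigl(\frac1p,b+1-\frac1p\bigr)$. Your additional justification of the Minkowski interchange through the integral means $M_p(r,\cdot)$ on circles $|z|=r<1$ is a sound way to make rigorous a step the paper leaves implicit.
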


\begin{proof}
Using the weighted composition representation \eqref{eq:WCO} and Minkowski's integral inequality, we have
\begin{align}\label{eqhb}
\|\mathcal{H}_b f\|_{H^p}
\le\int_0^1\,\|T_t f\|_{H^p}\,dt.
\end{align}
By Lemma \ref{lem:wco}, we deduce that
\[
\|T_t f\|_{H^p}^p
=\frac{t^{1-p}}{(1-t)^{1-bp}} \frac{1}{2\pi} \int_{\phi_t(\T)} |f(w)|^p |w|^{p-2} |dw| .
\]
Since $p\ge2$ and $|w|\le1$ on $\phi_t(\T)$, we have $|w|^{p-2}\le1$.  Lemma \ref{lem:gabriel} then gives
\[
\frac{1}{2\pi}\int_{\phi_t(\T)}|f(w)|^p|w|^{p-2}\,|dw|
\le\frac{1}{2\pi}\int_{\phi_t(\T)}|f(w)|^p\,|dw|
\le\|f\|_{H^p}^p.
\]
Therefore

\begin{align}\label{eqtt}
\|T_t f\|_{H^p}\le \frac{t^{\frac{1}{p}-1}}{(1-t)^{\frac{1}{p}-b}} \|f\|_{H^p},
\qquad 0<t<1.
\end{align}
Combining \eqref{eqhb} and \eqref{eqtt} therefore gives
\[
\|\mathcal{H}_b f\|_{H^p}
\le\|f\|_{H^p}\int_0^1 t^{1/p-1}(1-t)^{b-1/p}\,dt
=B\Bigl(\frac1p,b+1-\frac1p\Bigr)\|f\|_{H^p}.
\]
This proves \eqref{eq:upper}.
\end{proof}

It remains to consider the case $1 < p < 2$. Let $q$ be the conjugate exponent, so that $1/p+1/q=1$ and $q>2$.  Since $(H^p)^*\cong H^q$ under the pairing \eqref{eq:pairing}, we have
\begin{equation}\label{eq:duality}
\|\mathcal{H}_b\|_{H^p\to H^p}=\|\mathcal{H}_b^*\|_{H^q\to H^q}.
\end{equation}

The adjoint is given explicitly by the following lemma; it follows from the same Cauchy-pairing computation as in \cite[Prop.~1]{Bellavita2024}.

\begin{lemma}\label{lem:adjoint}
For $1<p<\infty$ the adjoint $\mathcal{H}_b^*:H^q\to H^q$ is represented by
\begin{equation}\label{eq:adjoint}
\mathcal{H}_b^* g(z)=\int_0^1\frac{t^b}{1-(1-t)z}\,g(\phi_t(z))\,dt,
\qquad z\in\D,
\end{equation}
for every $g\in H^q.$
\end{lemma}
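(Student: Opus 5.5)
The plan is to identify the operator on the right-hand side of \eqref{eq:adjoint}, call it $S$, with $\mathcal{H}_b^*$ by checking the duality relation $\langle \mathcal{H}_b f,g\rangle=\langle f,Sg\rangle$ on monomials and then extending by density and continuity. Since all the kernels involved have real Taylor coefficients, the complex conjugations in the pairing \eqref{eq:pairing} cause no trouble.

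\emph{Step 1 (matrix entries of $\mathcal{H}_b$).} Take $f(z)=z^k$ and $g(z)=z^n$. Expand $(1-tz)^{-(b+1)}=\sum_{m}\frac{(b+1)_m}{m!}t^mz^m$ in \eqref{eq:Hb}. Extracting the coefficient of $z^n$ gives
\[
\langle \mathcal{H}_b z^k,z^n\rangle=\frac{(b+1)_n}{n!}\,B(n+k+1,b+1)=\frac{\Gamma(b+n+1)\,(n+k)!}{n!\,\Gamma(n+k+b+2)}.
\]

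\emph{Step 2 (matrix entries of $S$).} In $Sz^n(z)=\int_0^1 t^{b+n}(1-(1-t)z)^{-(n+1)}\,dt$, the coefficient of $z^k$ in $(1-(1-t)z)^{-(n+1)}$ is $\frac{(n+1)_k}{k!}(1-t)^k$. Hence
\[
\langle z^k,Sz^n\rangle=\frac{(n+k)!}{n!\,k!}\,B(b+n+1,k+1)=\frac{\Gamma(b+n+1)\,(n+k)!}{n!\,\Gamma(n+k+b+2)}.
\]
This coincides with Step 1. Both coefficients are real, so no conjugation issue arises. By bilinearity, $\langle \mathcal{H}_b f,g\rangle=\langle f,Sg\rangle$ for all polynomials $f,g$.

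\emph{Step 3 (boundedness and density).} The main point is to know that $S$ is bounded on $H^q$, so that both sides extend continuously from polynomials. I would write $S=\int_0^1 \widetilde T_t\,dt$ with $\widetilde T_t g(z)=\frac{t^b}{1-(1-t)z}\,g(\phi_t(z))$. This is a weighted composition operator with the same symbol as in Lemma~\ref{lem:wco}, and only the constant factor in the weight changes. The computation of Lemma~\ref{lem:wco} therefore goes through verbatim, with $(1-t)^b$ replaced by $t^b$. For $q\ge2$ I would then combine it with Lemma~\ref{lem:gabriel} and $|w|^{q-2}\le1$, exactly as in Proposition~\ref{prop:upper}, to obtain $\|\widetilde T_t g\|_{H^q}\le C(t)\|g\|_{H^q}$ with $C(t)=t^{b+1/q-1}(1-t)^{-1/q}$ integrable on $(0,1)$. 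Minkowski's inequality then makes $S$ bounded on $H^q$, and the integral in \eqref{eq:adjoint} converges pointwise on $\D$. The case $q<2$ (that is, $p>2$) needs a separate argument, since the weight $|w|^{q-2}$ is then unbounded near $w=0$. Here I would use that $\mathcal{H}_b$ is bounded on $H^p$ for $p\ge2$ by Proposition~\ref{prop:upper}, so $\mathcal{H}_b^*$ exists and is bounded on $H^q$. Its Taylor coefficients are determined by the matrix entries $\langle g,\mathcal{H}_b z^k\rangle$, which agree with those of $S g$ by Step 2. So it suffices to check that the integral in \eqref{eq:adjoint} converges locally uniformly and may be expanded termwise. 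This follows from the pointwise bound $|g(\phi_t(z))|\le C\|g\|_{H^q}(1-|\phi_t(z)|)^{-1/q}$ together with $|\phi_t(z)|\le t/(1-(1-t)|z|)$.

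\emph{Step 4 (conclusion).} Polynomials are dense in $H^p$ and $H^q$, and the boundedness of $\mathcal{H}_b$ on $H^p$ for $1<p<2$ is the statement being assumed in \eqref{eq:duality}. Extending the identity of Step 2 by continuity therefore gives $\langle \mathcal{H}_b f,g\rangle=\langle f,Sg\rangle$ for all $f\in H^p$ and $g\in H^q$, hence $\mathcal{H}_b^*=S$.

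The main obstacle I expect is Step 3 for $q<2$, namely justifying the pointwise convergence of the $t$-integral near $t=0$ and the interchange of sum and integral. The coefficient identity itself is a routine Beta-function computation.
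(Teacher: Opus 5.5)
Your proposal is correct and follows essentially the paper's route. The paper only invokes the Cauchy-pairing computation of Bellavita et al. Your Steps 1--2 carry that computation out explicitly: the identity $\frac{(b+1)_n}{n!}B(n+k+1,b+1)=\frac{(n+k)!}{n!\,k!}B(b+n+1,k+1)$ shows that the matrix of the right-hand side of \eqref{eq:adjoint} is the transpose of that of $\mathcal{H}_b$. Steps 3--4 then supply the boundedness and density details that the paper leaves implicit.

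Two small refinements are worth making.
\begin{enumerate}
\item For $q<2$, justify the termwise expansion by proving the identity for the dilations $g_r$ and letting $r\to1^-$, using your pointwise bound as a dominating function. Note that the delicate endpoint is $t\to1$, where $\phi_t(z)\to1$, not $t\to0$.
\item You need not assume that $\mathcal{H}_b$ is bounded on $H^p$ for $1<p<2$. Steps 2--3 already give $|\langle\mathcal{H}_b f,g\rangle|\le\|S\|\,\|f\|_{H^p}\|g\|_{H^q}$ for polynomials, which yields boundedness up to the (non-isometric) duality constant. This matters because the paper uses this lemma precisely to reach the range $1<p<2$.
\end{enumerate}
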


\begin{proposition}\label{prop:dual}
Let $b>0$ and $1<p<2$.  Then
\begin{equation} \label{eq:dual-upper}
\|\mathcal{H}_b\|_{H^p\to H^p}\le B\Bigl(\frac1p,\,b+1-\frac1p\Bigr).
\end{equation}
\end{proposition}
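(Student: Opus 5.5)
The plan is to pass to the adjoint and repeat the Minkowski--Gabriel argument of Proposition \ref{prop:upper} on $H^q$, where $q=p/(p-1)>2$. By \eqref{eq:duality} it suffices to show
\[
\|\mathcal{H}_b^*\|_{H^q\to H^q}\le B\Bigl(\frac1p,\,b+1-\frac1p\Bigr).
\]
Lemma \ref{lem:adjoint} writes $\mathcal{H}_b^*$ as an average of weighted composition operators with the same symbols $\phi_t$:
\[
\mathcal{H}_b^*g(z)=\int_0^1 S_tg(z)\,dt,\qquad S_tg(z)=\frac{t^b}{1-(1-t)z}\,g(\phi_t(z)).
\]
Comparing with the weight $w_t$, we get the identity $S_t=\bigl(t/(1-t)\bigr)^b\,T_t$ for $0<t<1$. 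So no new change-of-variables computation is needed: the norm of $S_t$ follows from Lemma \ref{lem:wco} applied to $T_t$ with exponent $q$.

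First apply Minkowski's integral inequality to get
\[
\|\mathcal{H}_b^*g\|_{H^q}\le\int_0^1\|S_tg\|_{H^q}\,dt.
\]
Next, since $q\ge 2$, the estimate \eqref{eqtt} from the proof of Proposition \ref{prop:upper} holds verbatim with $p$ replaced by $q$. That estimate rests on Lemma \ref{lem:wco}, the bound $|w|^{q-2}\le1$ on $\phi_t(\T)$, and Gabriel's Lemma \ref{lem:gabriel}, and it gives
\[
\|T_tg\|_{H^q}\le t^{\frac1q-1}(1-t)^{b-\frac1q}\|g\|_{H^q}.
\]
Multiplying by $(t/(1-t))^b$ then yields
\[
\|S_tg\|_{H^q}\le t^{\,b+\frac1q-1}(1-t)^{-\frac1q}\,\|g\|_{H^q}.
\]

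Integrating in $t$ gives
\[
\|\mathcal{H}_b^*g\|_{H^q}\le B\Bigl(b+\frac1q,\,1-\frac1q\Bigr)\|g\|_{H^q}.
\]
The integral converges because $b+1/q>0$ and $1-1/q=1/p>0$. Since $1/q=1-1/p$, we have $B(b+1/q,1-1/q)=B(b+1-1/p,\,1/p)$, and the symmetry $B(s,t)=B(t,s)$ turns this into exactly \eqref{eq:dual-upper}.

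The main point to check is that Lemma \ref{lem:wco} and the Gabriel step carry over to exponent $q$; they do, since the lemma holds for every finite exponent $\ge 1$ and $q>2$ puts us in the regime where $|w|^{q-2}\le 1$. A secondary issue is justifying Minkowski's inequality for the vector-valued integral. I would handle it as in Proposition \ref{prop:upper}: apply the inequality to the integral means on circles $|z|=r$ and then let $r\to1$. The remaining integrability in $t$ is ensured by the convergence of the Beta integral above.
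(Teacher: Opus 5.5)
Your proposal reproduces the paper's own proof almost line by line. The only addition is that you make explicit the relation $S_t=(t/(1-t))^bT_t$, which the paper uses tacitly. It therefore inherits the paper's gap, and that gap is fatal: the reduction \eqref{eq:duality} does not hold.

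Under the Cauchy pairing, $H^q$ represents $(H^p)^*$ only up to equivalence of norms, not isometrically. The functional $f\mapsto\langle f,g\rangle$ on $H^p$ has norm $\inf\{\|g+\bar u\|_{L^q}:\ u\in H^q,\ u(0)=0\}$, the quotient norm of $L^q$ modulo the annihilator of $H^p$. For $q\neq2$ this can be strictly smaller than $\|g\|_{H^q}$. If it never were, the Riesz projection $P$ would be a contraction on $L^q$, whereas $\|P\|_{L^q\to L^q}=1/\sin(\pi/q)>1$.

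So your $H^q$-norm bound on $\mathcal{H}_b^*$, which is correct, transfers to $\mathcal{H}_b$ only with the factor $\|P\|$:
\[
\|\mathcal{H}_b f\|_{H^p}=\sup_{\|h\|_{L^q}\le1}\bigl|\langle \mathcal{H}_b f,Ph\rangle\bigr|
=\sup_{\|h\|_{L^q}\le1}\bigl|\langle f,\mathcal{H}_b^*Ph\rangle\bigr|
\le\frac{1}{\sin(\pi/p)}\,B\Bigl(\frac1p,b+1-\frac1p\Bigr)\|f\|_{H^p}.
\]
The case $b=0$ is a warning sign. There $\mathcal{H}$ is its own Cauchy adjoint, so if \eqref{eq:duality} held, the range $1<p<2$ would follow in one line from $2\le p<\infty$. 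As the introduction recalls, it instead required the Nehari-type theorem of Dostani\'c, Jevti\'c and Vukoti\'c.

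No other argument can rescue this step, because \eqref{eq:dual-upper} is false for large $b$. Take $f\equiv1$, so that $\mathcal{H}_b1(z)=\int_0^1 x^b(1-xz)^{-1}\,dx=\sum_{n\ge0}z^n/(n+b+1)$. Use $\|\cdot\|_{H^p}\ge\|\cdot\|_{H^1}$ together with Hardy's inequality $\sum_{n\ge0}|a_n|/(n+1)\le\pi\|g\|_{H^1}$. For integer $b$ and every $p\ge1$ this gives
\[
\|\mathcal{H}_b\|_{H^p\to H^p}\ge\|\mathcal{H}_b1\|_{H^1}\ge\frac1\pi\sum_{n\ge0}\frac{1}{(n+1)(n+b+1)}=\frac{1}{\pi b}\Bigl(1+\frac12+\cdots+\frac1b\Bigr).
\]
For $b=13$ the harmonic sum is about $3.180>\pi$. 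The right-hand side therefore exceeds $1/13=B(1,13)=\lim_{p\to1^+}B\bigl(\frac1p,14-\frac1p\bigr)$. Hence \eqref{eq:dual-upper}, and with it Theorem \ref{thm:main}, fails for $b=13$ and all $p>1$ sufficiently close to $1$.

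The mechanism is visible in Lemma \ref{lem:wco}. For $p<2$ the factor $|w|^{p-2}$ exceeds $1$. For large $b$ the weight $t^{1/p-1}(1-t)^{b-1/p}$ concentrates near $t=0$, where $\phi_t(\T)$ nearly passes through the origin. Functions with $f(0)\neq0$ then pick up a large contribution. This is the same obstruction that forces the paper to restrict to $H^1_0$ at $p=1$.
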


\begin{proof}
Since $q>2$, we may apply the same estimate as in Proposition \ref{prop:upper} and Lemma \ref{lem:adjoint}.
\begin{align*}
\|\mathcal{H}_b^*(f)\|_{H^q}
&\le\int_0^1 t^{1/q-1}(1-t)^{-1/q}\,t^b\,d t\|f\|_{H^q}=\int_0^1 t^{b+1/q-1}(1-t)^{-1/q}\,d t\|f\|_{H^q}\\
&= B\!\left(\frac{1}{p},\,b+1-\frac{1}{p}\right)\|f\|_{H^q}.
\end{align*}
The last equality follows from the fact that $1/q = 1 - 1/p$ and the symmetry of the Beta function:
\[
B\Bigl(b+\frac1q,1-\frac1q\Bigr)
=B\Bigl(b+1-\frac1p,\frac1p\Bigr)
=B\Bigl(\frac1p,b+1-\frac1p\Bigr).
\]
Thus \eqref{eq:dual-upper} follows from \eqref{eq:duality}.
\end{proof}

\begin{proof}[{\bf Proof of Theorem {\rm\ref{thm:main}}}. ] Combining Propositions \ref{prop:upper} and \ref{prop:dual} with Theorem \ref{thm:lower} yields Theorem \ref{thm:main}.
\end{proof}

\section{\texorpdfstring{Boundedness on $H^1$ and unboundedness on $H^\infty$}{Boundedness on H1 and unboundedness on H-infinity}}
\label{sec:endpoints}
It is well known that the classical Hilbert operator $\mathcal H$ fails to be bounded on $H^1$ \cite{DiamantopoulosSiskakis2000}. In the case of the generalized Hilbert operator $\mathcal H_b$, however, the situation is completely different. In this section we prove Theorem \ref{thm:endpoints}.

\begin{proof}[{\bf Proof of Theorem \ref{thm:endpoints}}] ${\bf (i).}$
Let $f\in H^1$. Applying Fubini's theorem together with the boundary representation \eqref{eq:boundary} yields
\[
\|\mathcal{H}_b f\|_{H^1}
\le \frac{1}{2\pi}\int_0^{2\pi}\int_0^1 |f(t)|\,\frac{(1-t)^b}{|1-te^{i\theta}|^{b+1}}\,dt\,d\theta
=\int_0^1 |f(t)|\,(1-t)^b\,K_b(t)\,dt,
\]
where
\[
K_b(t)=\frac{1}{2\pi}\int_0^{2\pi}\frac{d\theta}{|1-te^{i\theta}|^{b+1}}.
\]
 By expanding $(1-ae^{i\theta})^{-(b+1)/2}$ and $(1-ae^{-i\theta})^{-(b+1)/2}$ in binomial series, multiplying, and integrating termwise,
 we have
\[
K_b(t)=F\Bigl(\frac{b+1}{2},\frac{b+1}{2},1;t^2\Bigr).
\]
For $b>0$ this hypergeometric function has a single singularity at $t=1$ and satisfies $$K_b(t)\le C_b(1-t)^{-b}$$ for some constant $C_b>0$. This follows from the standard asymptotic of the hypergeometric function at its singular point $z=1$, or directly from the integral estimate above.  Hence $(1-t)^bK_b(t)\le C_b$ for all $t\in[0,1)$.  Therefore
\[
\|\mathcal{H}_b f\|_{H^1}\le C_b\int_0^1|f(t)|\,dt
\le C_b C\|f\|_{H^1}
\]
by the Fej\'er--Riesz inequality \eqref{eq:FR}.  Thus $\mathcal{H}_b$ is bounded on $H^1$.

On the other hand, if $f \in H^1$ and $f(0)=0$, then $f$ admits a decomposition
\begin{equation}\label{eq-fz}
f(z)=zg(z), \qquad g\in H^1,
\end{equation}
with $\|f\|_{H^1}=\|g\|_{H^1}$.
Let \(S\) denote the unilateral shift on \(H^1\), i.e.,  \(Sg(z)=zg(z)\).
Applying \(T_t\) to both sides and then taking the \(H^1\) norm yields
\begin{align}\label{eq-t1}
\|T_t f\|_{H^1}=\|T_t (S g)\|_{H^1}.
\end{align}
  Since
\[
\phi_t'(z) = \frac{1-t}{t} \cdot (\phi_t(z))^2,
\]
by Lemma \ref{lem:gabriel} and a change of variables, we obtain
\begin{align}
\|T_t(Sg)\|_{H^1}
&= \frac{1}{2\pi}\int_{\mathbb{T}} |g(\phi_t(\zeta))| |\phi_t(\zeta)||w_t(\zeta)| |d\zeta|  \notag \\
&= \frac{(1-t)^b}{t}\frac{1}{2\pi}\int_{\mathbb{T}} |g(\phi_t(\zeta))| |\phi_t(\zeta)|^2 |d\zeta|  \notag \\
&= (1-t)^{b-1} \frac{1}{2\pi}\int_{\phi_t(\mathbb{T})} |g(w)|\,|dw| \notag \\
&\leq  (1-t)^{b-1} \|g\|_{H^1}=(1-t)^{b-1} \|f\|_{H^1}. \label{eq:Tt_Sg_bound}
\end{align}
Therefore, \eqref{eq:main1} is a direct consequence of \eqref{eqhb}, \eqref{eq:Tt_Sg_bound}, and Theorem \ref{thm:lower}.

${\bf (ii)}$.  Apply $\mathcal{H}_b$ to the constant function $f\equiv1$.  For $0<r<1$ we have
\[
\mathcal{H}_b(1)(r)=\int_0^1\frac{(1-t)^b}{(1-tr)^{b+1}}\,dt.
\]
With $u=1-t$ this becomes
\[
\mathcal{H}_b(1)(r)=\int_0^1\frac{u^b}{(1-r+ru)^{b+1}}\,du.
\]
Since $1-r+ru\le 1-r+u$ for $0\le u\le1$, we obtain
\[
\mathcal{H}_b(1)(r)\ge\int_0^1\frac{u^b}{(1-r+u)^{b+1}}\,du.
\]
For $1-r\le u\le1$ we have $1-r+u\le2u$, so
\[
\frac{u^b}{(1-r+u)^{b+1}}\ge\frac{1}{2^{b+1}u}.
\]
Consequently
\[
\mathcal{H}_b(1)(r)\ge\frac{1}{2^{b+1}}\int_{1-r}^1\frac{du}{u}
=\frac{1}{2^{b+1}}\log\frac{1}{1-r},
\]
which tends to $+\infty$ as $r\to1^-$.  Hence $\mathcal{H}_b(1)\notin H^\infty$, and $\mathcal{H}_b$ is unbounded on $H^\infty$.
\end{proof}

\begin{remark}
We provide a detailed comparison of $\mathcal{H}_{b}$ with the classical
Hilbert operator $\mathcal{H}=\mathcal{H}_{0}$.  The most significant qualitative difference is the boundedness
on $H^{1}$ for $b>0$.    For $b=0$ and $p\to 1^{+}$, $B(1/p,1-1/p)\to\infty$,
reflecting the unboundedness of $\mathcal{H}$ on $H^{1}$.
For $b>0$ and $p\to 1^{+}$, $$B\Bigl(\frac{1}{p},b+1-\frac{1}{p}\Bigr) \to B(1,b)=\frac{1}{b},$$
which is finite, consistent with Theorem~\ref{thm:endpoints}~(i).
For $p\to\infty$, the Hardy space $H^{p}$ approaches $H^{\infty}$
in a suitable sense.  Our norm formula gives
\begin{align*}
\lim_{p\to\infty} B\Bigl(\frac{1}{p},b+1-\frac{1}{p}\Bigr)
&= \lim_{p\to\infty}
\frac{\Gamma(1/p)\Gamma(b+1-\frac{1}{p})}{\Gamma(b+1)}
= \lim_{p\to\infty}\Gamma(1/p) = \infty,
\end{align*}
This divergence is consistent with Theorem~\ref{thm:endpoints}~(ii).
\end{remark}

\section{Proof of Theorem \ref{thm:bloch}}
\label{sec:bloch}

Theorem \ref{thm:endpoints} shows that $\mathcal{H}_b:H^\infty\to H^\infty$ is unbounded. However, if the target space is enlarged to the Bloch space $\mathcal B$, boundedness is restored. In this section we prove Theorem \ref{thm:bloch}, in which we determine the exact norm of $\mathcal{H}_b:H^\infty\to\mathcal B$.

Recall that
\[
\|\mathcal{H}_b f\|_\B=|\mathcal{H}_b f(0)|+\sup_{z\in\D}(1-|z|^2)|(\mathcal{H}_b f)'(z)|.
\]

\begin{proof}[{\bf Proof of Theorem \ref{thm:bloch}} ]
 {\bf Upper bound}.
Let $f\in H^\infty$ with $\|f\|_{H^\infty}\le1$.  From \eqref{eq:Hb} we have
\begin{equation}\label{eq:Hb0}
|\mathcal{H}_b f(0)|
\le\int_0^1(1-t)^b\,dt=\frac{1}{b+1}.
\end{equation}
Differentiating under the integral sign gives
\begin{equation}\label{eq:Hbp}
(\mathcal{H}_b f)'(z)=(b+1)\int_0^1 t\,f(t)\,\frac{(1-t)^b}{(1-tz)^{b+2}}\,dt.
\end{equation}
For $|z|=r<1$ this implies
\[
(1-r^2)|(\mathcal{H}_b f)'(z)|
\le (b+1)(1-r^2)\int_0^1\frac{t(1-t)^b}{(1-tr)^{b+2}}\,dt
=:I_b(r).
\]
Set
\[
J_b(r):=\int_0^1\frac{t(1-t)^b}{(1-tr)^{b+2}}\,dt,
\qquad 0\le r<1.
\]
With the substitution $u=(1-t)/(1-tr)$ we obtain $t=(1-u)/(1-ru)$, $1-t=u(1-r)/(1-ru)$, $1-tr=(1-r)/(1-ru)$, and
\[
J_b(r)=\frac{1}{1-r}\int_0^1\frac{(1-u)u^b}{1-ru}\,du.
\]
Consequently
\begin{equation}\label{eq:Ib}
I_b(r)=(b+1)(1+r)\int_0^1\frac{(1-u)u^b}{1-ru}\,du.
\end{equation}
Differentiation under the integral sign is justified because the integrand is non-negative and smooth, and from \eqref{eq:Ib} we obtain
\[
I_b'(r)=(b+1)\int_0^1\frac{(1-u)u^b(1+u)}{(1-ru)^2}\,du\ge0,
\]
so $I_b$ is increasing on $[0,1)$.  Letting $r\to1^-$ in \eqref{eq:Ib} and using Dominated Convergence Theorem gives
\[
\lim_{r\to1^-}I_b(r)
=(b+1)\cdot2\int_0^1 u^b\,du
=(b+1)\cdot2\cdot\frac{1}{b+1}=2.
\]
Hence $\sup\limits_{0\le r<1}I_b(r)=2$.  Together with \eqref{eq:Hb0} this yields
\[
\|\mathcal{H}_b f\|_\B\le\frac{1}{b+1}+2.
\]

{\bf Lower bound}.
Take the constant function $f\equiv1$, which satisfies $\|f\|_{H^\infty}=1$.  Then $\mathcal{H}_b(1)(0)=1/(b+1)$ and $(\mathcal{H}_b(1))'(z)$ is exactly the derivative computed in \eqref{eq:Hbp} with $f\equiv1$.  Therefore
\[
\|\mathcal{H}_b(1)\|_\B
\ge\frac{1}{b+1}+\sup_{0\le r<1}I_b(r)
=\frac{1}{b+1}+2.
\]
This matches the upper bound and completes the proof of Theorem \ref{thm:bloch}.

\begin{remark} Setting $b=0$ in Theorem~\ref{thm:bloch} gives
$\norm{\mathcal{H}}_{H^{\infty}\to\mathcal{B}}=3$, recovering the result
of~\cite{HY2025}.
\end{remark}

\end{proof}

{\bf Data Availability}  No data was used to support this study. \vskip 2mm
	
{\bf Conflicts of Interest}  The authors  declare that they have no conflicts of interest. \vskip 2mm

{\bf Acknowledgements} 	The first author was supported by  NNSF of China (No. 12371131). The  corresponding author was supported by Sichuan Provincial Natural Science Foundation of China (No. 2026NSFSC0718) and Guizhou Education Department Youth Science and Technology Talent Growth Project (No. QianJiaoJi [2024] 159). 


\vskip 2mm


\begin{thebibliography}{10}

\bibitem{bw} G. Bao and  H. Wulan,  Hankel matrices acting on Dirichlet spaces, \textit{ J. Math. Anal.   Appl.}  {\bf 409} (2014), no.~1, 228--235.

\bibitem{Bellavita2024} C. Bellavita, N. Chalmoukis, V. Daskalogiannis and G. Stylogiannis, Generalized Hilbert operators arising from Hausdorff matrices,
\textit{Proc. Amer. Math. Soc.} \textbf{152} (2024), no.~11, 4759--4773.

\bibitem{DiamantopoulosSiskakis2000} E. Diamantopoulos and A. G. Siskakis, Composition operators and the Hilbert matrix,
\textit{Studia Math.} \textbf{140} (2000), no.~2, 191--198.

\bibitem{DostanicJevticVukotic2008} M. Dostani\'c, M. Jevti\'c and D. Vukoti\'c, Norm of the Hilbert matrix on Bergman and Hardy spaces and a theorem of Nehari type,
\textit{J. Funct. Anal.} \textbf{254} (2008), no.~11, 2800--2815.

\bibitem{Duren70} P.  Duren, Theory of $H^p$ spaces, 
\textit{Academic Press, New York,} 1970; \textit{reprinted by Dover, Mineola, NY,} 2000.

\bibitem{Frazer1934} H. Frazer, On the moduli of regular functions, \textit{Proc. Lond. Math. Soc.}  \textbf{37} (1934), no.~2, 49--65.

\bibitem{Gabriel1928} R.   Gabriel, Some results concerning the integrals of moduli of regular functions along curves of certain types,
\textit{Proc. Lond. Math. Soc.}   \textbf{28} (1928), no.~2, 121--127.



\bibitem{HY2025} H.~Hu and S.~Ye, Norm of the Hilbert matrix operator between some spaces of analytic functions,
\emph{J.~Geom.\ Anal.}\ \textbf{35} (2025), 184.

\bibitem{Li2009} S.~Li, Generalized Hilbert operator on Dirichlet-type space, \emph{Appl.\ Math.\ Comput.}\ \textbf{214} (2009), no.~1, 304--309.

\bibitem{LiStevic2009} S.~Li and S.~Stevi\'c, Generalized Hilbert operator and Fej\'er--Riesz type inequalities on the polydisc,
\emph{Acta Math.\ Sci.\ Ser.\ B} \textbf{29} (2009), no.~1, 191--200.

\bibitem{zkh} K. Zhu,  Operator Theory in Function Spaces,
\textit{   Second Edition, Amer. Math. Soc. } 2007.

\end{thebibliography}
\end{document}